\theoremstyle{plain}
  \newtheorem{thm}{Theorem}[section]
  \newtheorem{lem}[thm]{Lemma}
  \newtheorem{cor}[thm]{Corollary}
  \newtheorem{prop}[thm]{Proposition}
\theoremstyle{definition}
  \newtheorem{defn}[thm]{Definition}
\theoremstyle{remark}
\numberwithin{equation}{section}
\DeclareMathOperator{\vol}{vol}
\DeclareMathOperator{\diam}{diam}
\DeclareMathOperator{\ObsDiam}{ObsDiam}
\DeclareMathOperator{\Sep}{Sep}
\newcommand{\cL}{\mathcal{L}}
\newcommand{\field}[1]{\mathbb{#1}}
\newcommand{\R}{\field{R}}
\begin{document}

\title{Estimate of observable diameter of $l_p$-product spaces}

\thanks{The authors are partially supported by a Grant-in-Aid
for Scientific Research from the Japan Society for the Promotion of Science.
The first author is supported by Research Fellowships of the Japan Society for the Promotion of Science for Young Scientists.}

\begin{abstract}
  We estimate the observable diameter of the $l_p$-product space
  $X^n$ of an mm-space $X$ by using the limit formula in our previous paper
  \cite{OzSy:pyramid}.
  The idea of our proof is based on Gromov's book
  \cite{Gmv:greenbook}*{\S 3.$\frac{1}{2}$}.
  As a corollary we obtain the phase transition property of $\{X^n\}_{n=1}^\infty$
  under a discreteness condition.
\end{abstract}

\author{Ryunosuke Ozawa \and Takashi Shioya}

\address{Mathematical Institute, Tohoku University, Sendai 980-8578,
  JAPAN}

\date{\today}

\keywords{metric measure space, product space, concentration of measure,
observable diameter, concentration function, phase transition}

\subjclass[2010]{Primary 53C23}


\maketitle

\section{Introduction}
\label{sec:intro}

In the study of concentration of measure phenomenon discovered
by L\'evy and Milman,
it is an important problem to estimate the observable diameter
(or equivalently the concentration function)
 of a given mm-space,
where an \emph{mm-space} is defined to be a space $X$
equipped with a complete separable metric $d_X$ and
a Borel probability measure $\mu_X$ on $X$.
The purpose of this paper is to estimate
the observable diameter of the \emph{$l_p$-product space}
\[
X_p^n := (X^n,d_{X^n_p},\mu_X^{\otimes n}),
\]
of an mm-space $X$,
where $d_{X^n_p}$ is the \emph{$l_p$-metric}, $1 \le p < +\infty$, defined by
\[
d_{X^n_p}(x,y) :=
  \left( \sum\limits_{i=1}^n d_X(x_i,y_i)^p \right)^{\frac{1}{p}}
\]
for $x = (x_1,x_2,\dots,x_n), y = (y_1,y_2,\dots,y_n) \in X^n$,
and $\mu_X^{\otimes n}$ is the product measure.
We refer to \cites{AmMlm,MlmSch,Ld,Tlg} 
for some significant studies of concentration of measure
for $l_1$-product spaces.

One of our main theorems is stated as follows.

\begin{thm} \label{thm:upper}
  Let $X$ be an mm-space with finite diameter.
  Then, for any $0 < \kappa < 1$ and $1 \le p < +\infty$, we have
  \[
  \ObsDiam(X_p^n;-\kappa) \le C_{\kappa,p} \diam(X) \, n^{\frac{1}{2p}},
  \]
  where
  \[
  C_{\kappa,p} :=
  \begin{cases}
    4\sqrt{2\log\frac{2}{\kappa}} &\text{if $p = 1$}, \\
    4 + 4\sqrt{2\log\frac{2}{\kappa}} &\text{if $0 < p < +\infty$}.
  \end{cases}
  \]
\end{thm}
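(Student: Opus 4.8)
The plan is to run Gromov's combinatorial scheme for product concentration, isolating the role of the $l_p$-metric in a comparison with the Hamming distance. Fix a $1$-Lipschitz map $f\colon X_p^n\to\R$ and let $m$ be a median of $f_*\mu_X^{\otimes n}$. Since $\ObsDiam(X_p^n;-\kappa)$ is the supremum over such $f$ of the partial diameter of $f_*\mu_X^{\otimes n}$, it suffices to bound each of the two tails $\mu_X^{\otimes n}(f\ge m+s)$ and $\mu_X^{\otimes n}(f\le m-s)$ by $\kappa/2$ for $s=\tfrac12 C_{\kappa,p}\diam(X)\,n^{1/(2p)}$: then $f$ stays in an interval of length $2s$ on a set of measure $\ge1-\kappa$. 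Equivalently this is an estimate of the separation distance $\Sep$ of $X_p^n$, and the limit formula together with the comparison between $\ObsDiam$ and $\Sep$ from \cite{OzSy:pyramid} let us pass between the two formulations and, if convenient, reduce to finitely supported $X$.

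For the first tail (the second being symmetric) set $A:=\{f\le m\}$, so $\mu_X^{\otimes n}(A)\ge1/2$. If $f(x)\ge m+s$ then $d_{X^n_p}(x,A)\ge s$ because $f$ is $1$-Lipschitz for $d_{X^n_p}$, so it is enough to bound $\mu_X^{\otimes n}(d_{X^n_p}(\cdot,A)\ge s)$. The key inequality is
\[
d_{X^n_p}(x,A)\le\diam(X)\,\rho_A(x)^{1/p},\qquad \rho_A(x):=\min_{y\in A}\#\{\,i : x_i\ne y_i\,\},
\]
which holds since $\sum_i d_X(x_i,y_i)^p\le\diam(X)^p\,\#\{i:x_i\ne y_i\}$ for every $y$. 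The Hamming-type function $\rho_A$ changes by at most $1$ under a change of one coordinate, so the bounded-differences (Azuma--Hoeffding) inequality along the coordinate filtration yields sub-Gaussian concentration with variance proxy of order $n$; moreover $\rho_A\equiv0$ on the set $A$ of measure $\ge1/2$ forces $\mathbb{E}[\rho_A]=O(\sqrt n)$, so that $\mu_X^{\otimes n}(\rho_A\ge\mathbb{E}[\rho_A]+t)\le e^{-2t^2/n}$ for $t\ge0$. Taking $\mathbb{E}[\rho_A]+t=(s/\diam X)^p$ and unwinding, the stated choice of $C_{\kappa,p}$ makes both tails at most $\kappa/2$. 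When $p=1$ this detour through $\rho_A$ is lossy, and one instead applies the bounded-differences inequality directly to $f$, whose oscillation in each coordinate is at most $\diam(X)$; this gives the cleaner constant $4\sqrt{2\log(2/\kappa)}$.

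I expect the main obstacle to be conceptual rather than computational. A direct bounded-differences estimate for $f$ itself only sees the $l^2$-norm of the vector of coordinatewise oscillations, hence yields the far weaker bound $\ObsDiam(X_p^n;-\kappa)=O(\diam(X)\sqrt n)$, whereas a Talagrand convex-distance bound would be dimension-free but is unavailable because $f$ need not be convex. The $l_p$-structure is recovered precisely through the exponent $1/p$ in $d_{X^n_p}(\cdot,A)\le\diam(X)\,\rho_A^{1/p}$, which turns the $O(\sqrt n)$ Hamming concentration into an $O(n^{1/(2p)})$ estimate; the additive ``$4+$'' in $C_{\kappa,p}$ is the footprint of the $\mathbb{E}[\rho_A]=O(\sqrt n)$ correction once the $p$-th root is extracted. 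Finally one checks that the estimate, proved first for finite mm-spaces, passes to a general $X$ of finite diameter by approximation in the concentration topology, both sides being continuous there; this is where the machinery of \cite{OzSy:pyramid} enters.
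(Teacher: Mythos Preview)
Your argument is correct and reaches the same conclusion, but the route differs from the paper's. Both proofs handle $p=1$ via the bounded--differences inequality and both pass from finite to general $X$ using the limit formula of \cite{OzSy:pyramid}. For $p>1$, however, the paper does \emph{not} work with the Hamming function $\rho_A$ directly. Instead it first dominates an arbitrary finite mm-space by a $k$-regular one (all nonzero distances equal to $1$, uniform measure), and on such a space proves the comparison
\[
\ObsDiam(X_p^n;-\kappa)\ \le\ n^{\frac{1}{2p}-\frac{1}{2}}\ObsDiam(X_1^n;-\kappa)+4\,n^{\frac{1}{2p}}
\]
by a $\delta$-net argument: on a maximal $n^{1/(2p)}$-separated net one has $d_{X^n_p}=d_{X^n_1}^{1/p}\le n^{(1-p)/(2p)}d_{X^n_1}$, so the restriction of any $1$-Lipschitz $f$ extends to a $n^{(1-p)/(2p)}$-Lipschitz function for the $l_1$-metric, and one compares $f$ with this extension. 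Your approach short-circuits both the $k$-regular reduction and the net/extension step by using $d_{X^n_p}(\cdot,A)\le\diam(X)\,\rho_A^{1/p}$ and applying McDiarmid to $\rho_A$ itself; this is more direct and in fact yields a slightly sharper constant (the $(\,\cdot\,)^{1/p}$ sits outside the whole bracket, whereas the paper's modular inequality produces the additive ``$4+$'' before taking any root). What the paper's formulation buys is an explicit black-box inequality between $\ObsDiam$ of the $l_p$- and $l_1$-products, which may be of independent use; what your route buys is a shorter path that works for any finite $X$ without the Lipschitz-order detour through regular spaces.
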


In the case where $p = 1$, Theorem \ref{thm:upper}
is already known (see \cite{Ld}*{\S 1.6}).
For $p > 1$,
a rough sketch of the proof was given in
\cite{Gmv:greenbook}*{\S 3.$\frac12$.62}.
In this paper, we give a complete proof of the theorem
by using the limit formula for observable diameter
(see Theorem \ref{thm:limf})
established in our previous paper \cite{OzSy:pyramid}.

For lower estimate, we have the following theorems.

\begin{thm} \label{thm:lower-p}
  Let $X$ be a nontrivial mm-space of finite diameter such that $d_X(x,y) \ge \delta$
  for any different $x,y \in X$, where $\delta$ is a positive constant.
  Then, for any $0 < \kappa < 1$ and $1 < p < +\infty$, we have
  \[
  \liminf_{n\to\infty} \frac{\ObsDiam(X_p^n;-\kappa)}{n^{\frac{1}{2p}}}
  \ge \left( \frac{2\,\Phi^{-1}(\kappa) \sqrt{V_p(X)}}{p} \right)^{\frac{1}{p}}
  \left( \frac{\delta}{\diam(X)} \right)^{1-\frac{1}{p}},
  \]
  where
  \[
  \Phi(r) := \int_r^\infty \frac{1}{\sqrt{2\pi}} e^{-\frac{t^2}{2}}\;dt, \qquad
  V_p(X) := \sup_{x \in X} V(d_X(x,\cdot)^p),
  \]
  and $V(f)$ is the variance of a function $f : X \to \R$, i.e.,
  $V(f) := \int_X \left(f - \int_X f d\mu_X \right)^2\;d\mu_X$.
\end{thm}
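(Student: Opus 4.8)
The plan is to establish the lower bound by exhibiting, for each large $n$, an explicit $1$-Lipschitz function $f_n\colon X_p^n\to\R$ whose push-forward has partial diameter of order $n^{1/(2p)}$, and then to invoke the definition of the observable diameter, which gives $\ObsDiam(X_p^n;-\kappa)\ge\diam\bigl((f_n)_*\mu_X^{\otimes n};1-\kappa\bigr)$. We may assume $V_p(X)>0$ and $\Phi^{-1}(\kappa)>0$ (i.e.\ $\kappa<1/2$), the statement being trivial otherwise. I would fix $x_0\in X$ with $v:=V(d_X(x_0,\cdot)^p)$ as close to $V_p(X)$ as desired (so $v>0$), set $m:=\int_X d_X(x_0,\cdot)^p\,d\mu_X>0$, and for a parameter $c>0$ put $s_n:=nm-c\sqrt{nv}$,
\[
  A_n:=\Bigl\{\,y\in X^n : \sum_{i=1}^n d_X(x_0,y_i)^p\le s_n\,\Bigr\},
\]
a closed $l_p$-ball around $(x_0,\dots,x_0)$ once $n$ is large, and $f_n(x):=d_{X^n_p}(x,A_n)$, which is automatically $1$-Lipschitz. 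Note that $\sum_i d_X(x_0,x_i)^p$ has mean $nm$ and variance $nv$ under $\mu_X^{\otimes n}$, so the threshold $s_n$ sits roughly $c$ standard deviations below its mean.

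The key step is the pointwise estimate
\[
  f_n(x)\ \ge\ p^{-1/p}\left(\frac{\delta}{\diam(X)}\right)^{1-1/p}\left(\sum_{i=1}^n d_X(x_0,x_i)^p-s_n\right)_+^{1/p}.
\]
To prove it, fix $y\in A_n$ and let $I:=\{i : x_i\ne y_i\}$. By the discreteness hypothesis, $d_X(x_i,y_i)\ge\delta$ for $i\in I$, so $d_{X^n_p}(x,y)^p=\sum_{i\in I}d_X(x_i,y_i)^p\ge\delta^{p-1}\sum_{i\in I}d_X(x_i,y_i)$. On the other hand, the elementary comparison $(a^p-b^p)/(a-b)\le p\,\diam(X)^{p-1}$ for $0\le b<a\le\diam(X)$, together with $|d_X(x_0,x_i)-d_X(x_0,y_i)|\le d_X(x_i,y_i)$, gives $d_X(x_0,x_i)^p-d_X(x_0,y_i)^p\le p\,\diam(X)^{p-1}d_X(x_i,y_i)$ for each $i$; summing over $i\in I$ and using $\sum_i d_X(x_0,y_i)^p\le s_n$ yields $\sum_i d_X(x_0,x_i)^p-s_n\le p\,\diam(X)^{p-1}\sum_{i\in I}d_X(x_i,y_i)$. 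Chaining the two inequalities and taking the infimum over $y\in A_n$ proves the displayed bound.

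Next I would apply the central limit theorem to the i.i.d.\ functions $\xi_i:=d_X(x_0,x_i)^p$. It gives $\mu_X^{\otimes n}(f_n=0)=\mu_X^{\otimes n}(x\in A_n)=\mu_X^{\otimes n}\bigl(\sum_i\xi_i\le s_n\bigr)\to\Phi(c)$, and, writing $b_n:=p^{-1/p}(\delta/\diam(X))^{1-1/p}(nv)^{1/(2p)}\beta^{1/p}$ for $\beta>0$, the pointwise estimate gives $\mu_X^{\otimes n}(f_n>b_n)\ge\mu_X^{\otimes n}\bigl(\sum_i\xi_i-s_n>\beta\sqrt{nv}\bigr)\to\Phi(\beta-c)$. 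Choosing $c<\Phi^{-1}(\kappa)$ and $0<\beta<c+\Phi^{-1}(\kappa)$ makes both limits exceed $\kappa$, so for all large $n$ the measure $(f_n)_*\mu_X^{\otimes n}$ puts mass $>\kappa$ on $\{0\}$ and mass $>\kappa$ on $(b_n,\infty)$; hence every Borel set of measure $\ge1-\kappa$ must contain $0$ and some point $>b_n$, so has diameter $\ge b_n$. Thus $\ObsDiam(X_p^n;-\kappa)\ge b_n$ for all large $n$, and dividing by $n^{1/(2p)}$, letting $n\to\infty$, and then letting $c\uparrow\Phi^{-1}(\kappa)$, $\beta\uparrow2\Phi^{-1}(\kappa)$ and $v\uparrow V_p(X)$ produces exactly the asserted inequality. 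Alternatively one may feed the rescaled push-forwards into the limit formula of Theorem~\ref{thm:limf}, identifying the limit as the law of a \emph{$p$-th root of a truncated Gaussian} and reading off its observable diameter; this shortens the bookkeeping but the content is the same.

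I expect the main obstacle to be the pointwise lower bound for $f_n$, i.e.\ correctly estimating the $l_p$-cost of moving a configuration into $A_n$: the naive estimate — spreading the required decrease of $\sum_i d_X(x_0,\cdot)^p$ evenly over all $n$ coordinates — only gives order $n^{1/p-1/2}=o(n^{1/(2p)})$ for $p>1$, and it is precisely the discreteness assumption $d_X\ge\delta$, which forces each altered coordinate to cost at least $\delta^p$ in the $l_p$-metric, that upgrades this to the sharp order $n^{1/(2p)}$. Getting the constant sharp additionally needs the mean-value comparison $(a^p-b^p)/(a-b)\le p\,\diam(X)^{p-1}$ rather than a cruder bound, and tuning the radius of $A_n$ so that the atom of $(f_n)_*\mu_X^{\otimes n}$ at $0$ and its upper $\kappa$-quantile each contribute a factor $\Phi^{-1}(\kappa)$, which is where the $2\,\Phi^{-1}(\kappa)$ comes from.
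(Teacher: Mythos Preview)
Your proof is correct and follows essentially the same route as the paper: both arguments apply the central limit theorem to the i.i.d.\ sum $\sum_i d_X(x_0,x_i)^p$, and both convert the gap $\sum_i d_X(x_0,x_i)^p - s_n$ into a lower bound on $d_{X_p^n}(x,A_n)$ via the identical chain $|a^p-b^p|\le p\,\diam(X)^{p-1}|a-b|$ together with the discreteness step $d_X(x_i,y_i)\ge\delta\Rightarrow d_X(x_i,y_i)^p\ge\delta^{p-1}d_X(x_i,y_i)$. The only cosmetic difference is that the paper packages the conclusion through the separation distance and Proposition~\ref{prop:ObsDiam-Sep} (two sets $A_n,B_n$ of measure $\ge\kappa$ at controlled $l_p$-distance), whereas you inline that step by working directly with the $1$-Lipschitz witness $f_n=d_{X_p^n}(\cdot,A_n)$ and its partial diameter; the $\varepsilon$-of-room in your choice $c<\Phi^{-1}(\kappa)$ plays exactly the role of the paper's $\kappa'<\kappa$.
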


If $p = 1$, then the $\delta$-discreteness assumption is not necessary.

\begin{thm} \label{thm:lower-1}
 Let $X$ be a nontrivial mm-space of finite diameter.
 Then, for any $0 < \kappa < 1$, we have
 \[
 \liminf_{n\to\infty} \frac{\ObsDiam(X_1^n;-\kappa)}{\sqrt{n}}
 \ge 2\,\Phi^{-1}(\kappa) \sqrt{V_1(X)}.
 \]
\end{thm}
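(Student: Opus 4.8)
The plan is to bound $\ObsDiam(X_1^n;-\kappa)$ from below by exhibiting, for a well-chosen function $g$ on $X$, the $1$-Lipschitz function $f(x_1,\dots,x_n):=\sum_{i=1}^n g(x_i)$ on $X_1^n$ and then analyzing the law of $f$ under $\mu_X^{\otimes n}$ via the classical central limit theorem. If $g\colon X\to\R$ is $1$-Lipschitz, then $|f(x)-f(y)|\le\sum_i|g(x_i)-g(y_i)|\le\sum_i d_X(x_i,y_i)=d_{X_1^n}(x,y)$, so $f$ is $1$-Lipschitz on $X_1^n$; note that this simple tensorization is exactly what fails for $p>1$, which is why Theorem~\ref{thm:lower-p} needs a different construction and a discreteness hypothesis. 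Under $\mu_X^{\otimes n}$ the random variable $f$ is a sum of $n$ independent copies of the bounded random variable $g$ on $(X,\mu_X)$, with mean $m(g)=\int_X g\,d\mu_X$ and variance $V(g)$.

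Next I would invoke the central limit theorem: since $X$ has finite diameter and $g$ is Lipschitz, $g$ is bounded, so the law of $(f-n\,m(g))/\sqrt n$ under $\mu_X^{\otimes n}$ converges weakly, as $n\to\infty$, to the centered Gaussian measure $\gamma_{V(g)}$ of variance $V(g)$ on $\R$ (we may assume $V(g)>0$, else there is nothing to prove for that $g$). The partial diameter $\diam(\,\cdot\,;1-\kappa)$ is translation invariant and positively homogeneous, so it remains to pass this weak convergence through to the partial diameter. This is the crux: one checks that $\diam(\,\cdot\,;1-\kappa)$ is lower semicontinuous under weak convergence of Borel probability measures on $\R$ --- if $\nu_k\to\nu$ weakly and each $\nu_k$ gives measure $\ge1-\kappa$ to a set of diameter $\le M$, then, replacing those sets by containing closed intervals and extracting a convergent subsequence of intervals (closed bounded intervals of $\R$ being Hausdorff-compact), the limit interval has length $\le M$ and $\nu$-measure $\ge1-\kappa$, whence $\diam(\nu;1-\kappa)\le\liminf_k\diam(\nu_k;1-\kappa)$. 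Applied here this gives
\[
\liminf_{n\to\infty}\frac{\diam\bigl(f_*\mu_X^{\otimes n};1-\kappa\bigr)}{\sqrt n}\ \ge\ \diam(\gamma_{V(g)};1-\kappa).
\]

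Finally, since $f$ is $1$-Lipschitz, $\ObsDiam(X_1^n;-\kappa)\ge\diam(f_*\mu_X^{\otimes n};1-\kappa)$, so dividing by $\sqrt n$ and taking $\liminf$ yields $\liminf_n\ObsDiam(X_1^n;-\kappa)/\sqrt n\ge\diam(\gamma_{V(g)};1-\kappa)$ for every $1$-Lipschitz $g$. Specializing to $g=d_X(x_0,\cdot)$ for $x_0\in X$ gives $V(g)=V(d_X(x_0,\cdot))$, and letting $x_0$ range so that $V(d_X(x_0,\cdot))\uparrow V_1(X)$ (using monotonicity of $v\mapsto\diam(\gamma_v;1-\kappa)$; nontriviality of $X$ guarantees $V_1(X)>0$, and the inequality is vacuous when $\kappa\ge1/2$) reduces the theorem to the explicit one-dimensional computation $\diam(\gamma_v;1-\kappa)\ge 2\,\Phi^{-1}(\kappa)\sqrt v$, which follows directly from the definition of the partial diameter (or separation distance) of a Gaussian together with the defining formula for $\Phi$. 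Alternatively, the entire asymptotic step --- the CLT plus the semicontinuity argument --- can be replaced by an appeal to the limit formula of Theorem~\ref{thm:limf}, which already packages $\lim_n$ of the observable diameter of the rescaled spaces $X_1^n$ in terms of their Gaussian limit; I expect the semicontinuity of the partial diameter under weak convergence (equivalently, checking that the CLT limit survives passage to the partial-diameter functional) to be the only genuinely delicate point.
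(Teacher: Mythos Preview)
Your proof is correct and shares the paper's core idea: push forward $\mu_X^{\otimes n}$ by $f_n(x)=\sum_i d_X(x_0,x_i)$ and invoke the central limit theorem. The difference lies in how the CLT is converted into an observable-diameter bound. You exploit the fact, specific to $p=1$, that $f_n$ is $1$-Lipschitz on $X_1^n$, so that $\ObsDiam(X_1^n;-\kappa)\ge\diam((f_n)_*\mu_X^{\otimes n};1-\kappa)$, and then pass to the Gaussian limit via lower semicontinuity of the partial diameter under weak convergence. The paper instead takes the $\kappa$-quantiles $\alpha_n,\beta_n$ of $(f_n)_*\mu_X^{\otimes n}$, sets $A_n=f_n^{-1}(-\infty,\alpha_n]$ and $B_n=f_n^{-1}[\beta_n,+\infty)$, bounds $d_{X_1^n}(A_n,B_n)$ from below by the same Lipschitz estimate $|f_n(x)-f_n(y)|\le d_{X_1^n}(x,y)$, and concludes via $\Sep(X_1^n;\kappa,\kappa)\le\ObsDiam(X_1^n;-\kappa')$ for $\kappa'<\kappa$. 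Your route is slightly more direct for $p=1$ and in fact yields the sharper constant $2\,\Phi^{-1}(\kappa/2)\sqrt{V_1(X)}$ (since $\diam(\gamma_v;1-\kappa)=2\,\Phi^{-1}(\kappa/2)\sqrt v$), which dominates the stated bound; the paper's separation-distance route, on the other hand, is what allows a unified treatment of Theorems~\ref{thm:lower-p} and~\ref{thm:lower-1}, since for $p>1$ the function $f_n$ is no longer $1$-Lipschitz on $X_p^n$ and one must estimate distances between level sets directly. Your closing remark about substituting Theorem~\ref{thm:limf} is not quite apt: that limit formula concerns $\square$-convergence of mm-spaces, not weak convergence of pushforward measures on $\R$, and the rescaled spaces $n^{-1/2}X_1^n$ are not known to $\square$-converge; your semicontinuity argument for the partial diameter is the right tool and is already complete as stated.
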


Gromov \cite{Gmv:greenbook}*{\S 3.$\frac{1}{2}$.42}
proved Theorem \ref{thm:lower-p} for $X = \{0,1\}$ and $p = 2$.

For Theorem \ref{thm:lower-p}, the $\delta$-discreteness assumption for $X$
is necessary.  In fact, we see from a result of \cite{GmvMlm} that
$\ObsDiam(X^n_2;-\kappa)$ is bounded from above
if $X$ is a compact and connected Riemannian manifold
(see Proposition \ref{prop:prod-mfd}).

In the case where $p = +\infty$,
we have the trivial uniform estimates
\[
0 < \ObsDiam(X;-\kappa) \le \ObsDiam(X_\infty^n;-\kappa) \le \diam(X)
\]
for $X$ nontrivial, 
where $d_{X^n_\infty}(x,y) := \max\limits_{i=1}^n d_X(x_i,y_i)$.

Combining Theorems \ref{thm:upper}, \ref{thm:lower-p}, and \ref{thm:lower-1}
tells us that the order of the observable diameter
$\ObsDiam(X^n_p;-\kappa)$ as $n \to \infty$
is $n^{\frac{1}{2p}}$, which is independent of $\kappa$.
This together with Theorem \cite{OzSy:pyramid}*{Theorem 1.2}
implies the following corollary.

\begin{cor}
  Let $X$ be a nontrivial mm-space of finite diameter and let $1 \le p \le +\infty$.
  We assume that $\inf_{x,y \in X,\ x \neq y} d_X(x,y) > 0$ if $1 < p < +\infty$.
  Then, $\{X^n_p\}_{n=1}^\infty$ has the phase transition property
  with critical scale order $\sim n^{-\frac{1}{2p}}$, i.e.,
  \begin{enumerate}
  \item if $n^{\frac{1}{2p}} t_n \to 0$ as $n\to\infty$,
    then $\{t_n X^n_p\}$ is a L\'evy family;
  \item if $n^{\frac{1}{2p}} t_n \to +\infty$ as $n\to\infty$,
    then $\{t_n X^n_p\}$ $\infty$-dissipates,
  \end{enumerate}
  where $t_n X^n_p$ is the scale change of $X^n_p$ of factor $t_n$.
\end{cor}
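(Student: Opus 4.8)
The plan is to deduce the corollary from Theorem~\cite{OzSy:pyramid}*{Theorem 1.2} once one knows that, for each fixed $\kappa$, the observable diameter $\ObsDiam(X_p^n;-\kappa)$ is of order exactly $n^{\frac1{2p}}$ as $n\to\infty$ (the constant $1$ when $p=+\infty$, which is the degenerate case). So the first step is to record two one-sided asymptotic estimates. For the upper estimate, when $1\le p<+\infty$ Theorem~\ref{thm:upper} gives directly $\ObsDiam(X_p^n;-\kappa)\le C_{\kappa,p}\diam(X)\,n^{\frac1{2p}}$ for every $0<\kappa<1$, and when $p=+\infty$ the elementary bound $\ObsDiam(X_\infty^n;-\kappa)\le\diam(X)$ stated in the introduction does the same. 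For the lower estimate, when $1<p<+\infty$ the corollary's hypothesis $\delta:=\inf_{x\ne y}d_X(x,y)>0$ is exactly what Theorem~\ref{thm:lower-p} needs, when $p=1$ Theorem~\ref{thm:lower-1} applies with no discreteness assumption, and when $p=+\infty$ one simply uses $\ObsDiam(X_\infty^n;-\kappa)\ge\ObsDiam(X;-\kappa)>0$ for $X$ nontrivial.

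Two positivity points in the lower estimate are the only parts that are not pure citation. First, $\Phi^{-1}(\kappa)>0$ precisely for $0<\kappa<\tfrac12$, so the bounds of Theorems~\ref{thm:lower-p} and~\ref{thm:lower-1} are informative only for such $\kappa$, and I will only use them there. Second, one needs $V_p(X)>0$ (resp.\ $V_1(X)>0$): since $X$ is nontrivial, $\mu_X$ is not a Dirac measure, so for a point $x_0\in\supp\mu_X$ there are points of $\supp\mu_X$ at two distinct distances from $x_0$, hence $d_X(x_0,\cdot)^p$ is not $\mu_X$-essentially constant and $V_p(X)\ge V(d_X(x_0,\cdot)^p)>0$; likewise for $V_1(X)$.

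With these estimates in hand, for $1\le p<+\infty$ I feed them into Theorem~\cite{OzSy:pyramid}*{Theorem 1.2} to obtain assertions (1) and (2): the upper estimate, valid for all $\kappa$, makes $\{t_nX_p^n\}$ a L\'evy family when $n^{\frac1{2p}}t_n\to0$, while the lower estimate at small $\kappa$ yields $\infty$-dissipation when $n^{\frac1{2p}}t_n\to+\infty$. The degenerate case $p=+\infty$ I handle by hand: if $t_n\to0$ then $\ObsDiam(t_nX_\infty^n;-\kappa)\le t_n\diam(X)\to0$, a L\'evy family; if $t_n\to+\infty$, then, picking distinct $a,b\in\supp\mu_X$ and forcing finitely many coordinates into small disjoint balls around $a$ and $b$, one produces for every $N$ and all small $\kappa_0,\dots,\kappa_{N-1}>0$ Borel sets $A_0,\dots,A_{N-1}\subset X^n$ with $\mu_X^{\otimes n}(A_i)\ge\kappa_i$ pairwise $d_{X^n_\infty}$-separated by a positive constant independent of $n$, so that $\Sep(t_nX_\infty^n;\kappa_0,\dots,\kappa_{N-1})\to+\infty$. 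I expect the real difficulty to be the $\infty$-dissipation half in the range $1<p<+\infty$: it must be extracted from Theorem~\cite{OzSy:pyramid}*{Theorem 1.2}, and one should check that that theorem accepts a lower bound on $\ObsDiam(X_p^n;-\kappa)$ at a single small $\kappa$ (all that Theorem~\ref{thm:lower-p} delivers with a strictly positive constant) and still produces the required large separations; if it does not package this, one would have to supplement it with a separation-versus-observable-diameter comparison for the $l_p$-products $X_p^n$ in the spirit of the $p=+\infty$ argument above. Everything else is routine bookkeeping over the cases $p=1$, $1<p<+\infty$, $p=+\infty$.
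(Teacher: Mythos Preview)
Your approach is the paper's approach: the paper does not write out a proof of this corollary at all, merely remarking that Theorems~\ref{thm:upper}, \ref{thm:lower-p}, \ref{thm:lower-1} (together with the trivial $p=+\infty$ estimates) pin the order of $\ObsDiam(X_p^n;-\kappa)$ at $n^{1/(2p)}$, and then invoking \cite{OzSy:pyramid}*{Theorem~1.2}. Your write-up fills in details the paper omits (positivity of $V_p(X)$, the range $0<\kappa<\tfrac12$ for the lower bounds, the hand-done $p=+\infty$ case), and your caveat about whether \cite{OzSy:pyramid}*{Theorem~1.2} accepts a lower bound only at small $\kappa$ is a fair question the paper leaves implicit---but the overall route is identical.
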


A L\'evy family is a sequence of mm-spaces with observable diameter
tending to zero.
An $\infty$-dissipating sequence is an opposite notion to a L\'evy family
and means that
the mm-spaces disperse into many small pieces far apart each other
(see \cites{Gmv:green book,OzSy:pyramid,Sy:book} for the details).
The corollary claims the asymptotic uniformity of the $l_p$-product spaces
$X^n_p$ in a sense.

\section{Preliminaries}

In this section, we state some definitions and
facts needed in this paper.
We refer to \cites{Gmv:greenbook, Sy:book} for more details.

\begin{defn}[Lipschitz order] \label{defn:dom}
  Let $X$ and $Y$ be two mm-spaces.
  We say that $X$ (\emph{Lipschitz}) \emph{dominates} $Y$
  and write $Y \prec X$ if
  there exists a $1$-Lipschitz map $f : X \to Y$ with
  $f_*\mu_X = \mu_Y$,
  where $f_*\mu_X$ is the push-forward measure of $\mu_X$ by $f$.
  We call the relation $\prec$ the \emph{Lipschitz order}.
\end{defn}

\begin{defn}[Partial and observable diameter] \label{defn:ObsDiam}
  Let $X$ be an mm-space.
  For a real number $\alpha$, we define
  the \emph{partial diameter
    $\diam(X;\alpha) = \diam(\mu_X;\alpha)$ of $X$}
  to be the infimum of $\diam(A)$,
  where $A \subset X$ runs over all Borel subsets
  with $\mu_X(A) \ge \alpha$ and $\diam(A)$ denotes the diameter of $A$.
  For a real number $\kappa > 0$, we define
  the \emph{observable diameter of $X$} to be
  \begin{align*}
    \ObsDiam(X;-\kappa) &:= \sup\{\;\diam(f_*\mu_X;1-\kappa) \mid\\
    &\qquad\qquad\text{$f : X \to \R$ is $1$-Lipschitz continuous}\;\}.
  \end{align*}
\end{defn}

Clearly, $\ObsDiam(X;-\kappa)$ is monotone nonincreasing in $\kappa > 0$.
Note that $\ObsDiam(X;-\kappa) = \diam(X;1-\kappa) = 0$ for $\kappa \ge 1$.


For an mm-space $X$ and a real number $t > 0$, we define
$tX$ to be the mm-space $X$ with the scaled metric $d_{tX} := t d_X$.

\begin{prop}
  Let $X$ be an mm-space.
  Then we have
  \[
  \ObsDiam(tX;-\kappa) = t \ObsDiam(X;-\kappa)
  \]
  for any $t,\kappa > 0$.
\end{prop}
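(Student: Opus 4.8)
The plan is to reduce everything to the elementary scaling behaviour of the partial diameter. First I would record the auxiliary fact that for any mm-space $Y$ and any real $\alpha$ one has $\diam(tY;\alpha) = t\,\diam(Y;\alpha)$: indeed $tY$ and $Y$ share the same underlying set and the same measure, while every subset $A$ satisfies $\diam_{tY}(A) = t\,\diam_Y(A)$, so the infimum defining the partial diameter is just multiplied by $t$. Applied to $\R$ with its standard metric, this says in particular that the dilation $m_t : \R \to \R$, $m_t(s) := ts$, transforms a probability measure $\nu$ on $\R$ into one with $\diam\big((m_t)_*\nu;\alpha\big) = t\,\diam(\nu;\alpha)$.

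Next I would set up the correspondence between the test functions appearing in the two observable diameters. A map $f : X \to \R$ is $1$-Lipschitz with respect to $d_{tX} = t\,d_X$ if and only if $g := t^{-1}f$ is $1$-Lipschitz with respect to $d_X$; equivalently, $f \mapsto t^{-1}f$ is a bijection from the set of $1$-Lipschitz functions on $tX$ onto the set of $1$-Lipschitz functions on $X$, with inverse $g \mapsto tg$. Since the underlying measures coincide, $\mu_{tX} = \mu_X$, we get $f_*\mu_{tX} = (tg)_*\mu_X = (m_t)_*(g_*\mu_X)$.

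Finally I would combine the two observations: for each $1$-Lipschitz $f$ on $tX$, writing $g = t^{-1}f$,
\[
\diam(f_*\mu_{tX};1-\kappa) = \diam\big((m_t)_*(g_*\mu_X);1-\kappa\big) = t\,\diam(g_*\mu_X;1-\kappa),
\]
and taking the supremum over all such $f$, equivalently over all $1$-Lipschitz $g$ on $X$, yields $\ObsDiam(tX;-\kappa) = t\,\ObsDiam(X;-\kappa)$. There is essentially no serious obstacle here; the only point that deserves an explicit line is verifying that the assignment $f \leftrightarrow t^{-1}f$ is genuinely onto the $1$-Lipschitz functions on $X$, so that no admissible test function is lost in passing between the two spaces — and this is immediate from the definition of $d_{tX}$.
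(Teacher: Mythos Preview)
Your argument is correct. The paper itself does not supply a proof of this proposition; it is recorded as an elementary fact immediately after the definition of $tX$, and your verification via the scaling identity $\diam(tY;\alpha)=t\,\diam(Y;\alpha)$ for the partial diameter together with the bijection $f\leftrightarrow t^{-1}f$ between the $1$-Lipschitz classes on $tX$ and on $X$ is exactly the routine check one would write out.
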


\begin{prop} \label{prop:ObsDiam-dom}
  If $X \prec Y$, then
  \[
  \ObsDiam(X;-\kappa) \le \ObsDiam(Y;-\kappa)
  \]
  for any $\kappa > 0$.
\end{prop}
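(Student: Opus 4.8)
The plan is to reduce the comparison of observable diameters to the defining supremum over $1$-Lipschitz functions into $\R$, using the fact that $X \prec Y$ supplies, by Definition \ref{defn:dom}, a $1$-Lipschitz map $f \colon Y \to X$ with $f_*\mu_Y = \mu_X$.

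First I would fix $\kappa > 0$ and take an arbitrary $1$-Lipschitz function $g \colon X \to \R$. The key step is to observe that the composition $g \circ f \colon Y \to \R$ is again $1$-Lipschitz, being a composition of two $1$-Lipschitz maps, and that push-forward of measures is functorial, so that $(g\circ f)_*\mu_Y = g_*(f_*\mu_Y) = g_*\mu_X$. Hence the two push-forward measures on $\R$ coincide, and therefore so do their partial diameters: $\diam(g_*\mu_X;1-\kappa) = \diam((g\circ f)_*\mu_Y;1-\kappa)$. Since $g\circ f$ is an admissible test function for $Y$, the right-hand side is at most $\ObsDiam(Y;-\kappa)$ by Definition \ref{defn:ObsDiam}.

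Finally I would take the supremum of $\diam(g_*\mu_X;1-\kappa)$ over all $1$-Lipschitz $g \colon X \to \R$, which gives exactly $\ObsDiam(X;-\kappa)$ on the left and leaves $\ObsDiam(Y;-\kappa)$ as an upper bound on the right, yielding the claim.

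There is no real obstacle here; the only points needing (routine) care are the elementary facts that a composition of $1$-Lipschitz maps is $1$-Lipschitz and that $(g\circ f)_* = g_*\circ f_*$ on Borel measures, both immediate from the definitions, together with the observation that equal measures on $\R$ have equal partial diameters.
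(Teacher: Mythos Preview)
Your proof is correct and is precisely the standard argument: compose the given $1$-Lipschitz push-forward map $f\colon Y\to X$ with an arbitrary $1$-Lipschitz $g\colon X\to\R$, use functoriality of push-forward to identify $g_*\mu_X=(g\circ f)_*\mu_Y$, and bound by the supremum defining $\ObsDiam(Y;-\kappa)$. The paper itself states this proposition without proof, treating it as a known elementary fact (cf.\ the references \cite{Gmv:greenbook}, \cite{Sy:book}); your write-up supplies exactly the routine verification one would expect.
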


\begin{defn}
  The \emph{concentration function} $\alpha_X(r)$, $r > 0$,
  of an mm-space $X$ is defined by
  \[
  \alpha_X(r) := \sup\{\; 1-\mu_X(A_r) \mid
  A \subset X : \text{Borel}, \ \mu_X(A) \ge 1/2\;\},
  \]
  where $A_r$ denotes the open $r$-neighborhood of $A$.
\end{defn}

\begin{prop}[see \cite{Ld}*{Proposition 1.12}, \cite{Sy:book}*{Remark 2.28}]
   \label{prop:alpha-ObsDiam}
   We have
  \begin{align}
    \tag{1} &\ObsDiam(X;-\kappa)
    \le 2\,\inf\{\; r > 0 \mid \alpha_X(r) \le \kappa/2\;\},\\
    \tag{2} &\alpha_X(r)
    \le \sup\{\;\kappa > 0 \mid \ObsDiam(X;-\kappa) \ge r\;\}.
  \end{align}
\end{prop}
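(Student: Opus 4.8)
The plan is to prove the two inequalities separately; both reduce to elementary manipulations with medians of $1$-Lipschitz functions and with $r$-neighborhoods, so no deep input is needed, and one can simply follow the classical argument of \cite{Ld}*{Proposition 1.12}.

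For (1), I would first dispose of the case where $\{r>0 \mid \alpha_X(r)\le\kappa/2\}$ is empty, in which case the right-hand side is $+\infty$ and there is nothing to prove; otherwise let $r_0$ be its infimum and fix an arbitrary $r>r_0$, so that $\alpha_X(r)\le\kappa/2$ by monotonicity of $\alpha_X$. Given any $1$-Lipschitz $f:X\to\R$, I would pick a median $m$ of the push-forward $f_*\mu_X$ — it exists by right-continuity of its distribution function — and set $A:=f^{-1}((-\infty,m])$ and $B:=f^{-1}([m,+\infty))$, both of $\mu_X$-measure $\ge 1/2$. The $1$-Lipschitz property gives $A_r\subset f^{-1}((-\infty,m+r))$ and $B_r\subset f^{-1}((m-r,+\infty))$, and since $A$ and $B$ are admissible competitors in the definition of $\alpha_X(r)$ we have $\mu_X(A_r),\mu_X(B_r)\ge 1-\alpha_X(r)\ge 1-\kappa/2$, hence $\mu_X(A_r\cap B_r)\ge 1-\kappa$ and therefore $f_*\mu_X((m-r,m+r))\ge 1-\kappa$. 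As the interval $(m-r,m+r)$ has diameter $2r$, this yields $\diam(f_*\mu_X;1-\kappa)\le 2r$; taking the supremum over $f$ and then letting $r\downarrow r_0$ gives (1).

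For (2), I would assume $\alpha_X(r)>0$ (otherwise nothing is to be shown), fix $0<\kappa<\alpha_X(r)$, and aim to prove $\ObsDiam(X;-\kappa)\ge r$; since $\kappa>0$, this places $\kappa$ in the set on the right-hand side of (2), and letting $\kappa\uparrow\alpha_X(r)$ then finishes the proof. By definition of $\alpha_X(r)$ there is a Borel set $A$ with $\mu_X(A)\ge 1/2$ and $\mu_X(X\setminus A_r)>\kappa$, and the distance function $f:=d_X(\cdot,A)$ is $1$-Lipschitz with $f=0$ on $A$ and $f\ge r$ on $X\setminus A_r$, so $f_*\mu_X((-\infty,0])\ge 1/2>\kappa$ (using $\kappa<\alpha_X(r)\le 1/2$) and $f_*\mu_X([r,+\infty))>\kappa$. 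Then for any Borel $B\subset\R$ with $f_*\mu_X(B)\ge 1-\kappa$, i.e. $f_*\mu_X(\R\setminus B)\le\kappa$, the set $B$ must meet both $(-\infty,0]$ and $[r,+\infty)$, since otherwise $f_*\mu_X(\R\setminus B)$ would exceed $\kappa$; hence $\diam(B)\ge r$. Therefore $\diam(f_*\mu_X;1-\kappa)\ge r$, and a fortiori $\ObsDiam(X;-\kappa)\ge\diam(f_*\mu_X;1-\kappa)\ge r$.

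I do not expect a genuine obstacle here — this is the standard equivalence between observable diameter and concentration function, proved in \cite{Ld}*{Proposition 1.12} and \cite{Sy:book}*{Remark 2.28}. The only points that require a little care are the existence of a median via right-continuity of the one-dimensional distribution function, the bookkeeping of open versus closed $r$-neighborhoods so that the Lipschitz inclusions come with the correct strict inequalities, and a few harmless degenerate cases (for instance $\kappa\ge 2$, the neighborhood set being empty in (1), or $\alpha_X(r)=0$ in (2)).
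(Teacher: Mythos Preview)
Your proof is correct and follows exactly the classical argument of \cite{Ld}*{Proposition 1.12}. Note, however, that the paper itself does not prove this proposition at all: it is stated with references to \cite{Ld} and \cite{Sy:book} and used as a black box, so there is no proof in the paper to compare against.
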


\begin{defn}[Separation distance] \label{defn:Sep}
  Let $X$ be an mm-space.
  For any real numbers $\kappa_0,\kappa_1,\cdots,\kappa_N > 0$
  with $N\geq 1$,
  we define the \emph{separation distance}
  \[
  \Sep(X;\kappa_0,\kappa_1, \cdots, \kappa_N)
  \]
  of $X$ as the supremum of $\min_{i\neq j} d_X(A_i,A_j)$
  over all sequences of $N+1$ Borel subsets $A_0,A_2, \cdots, A_N \subset X$
  satisfying that $\mu_X(A_i) \geq \kappa_i$ for all $i=0,1,\cdots,N$,
  where $d_X(A_i,A_j) := \inf_{x\in A_i,y\in A_j} d_X(x,y)$.
  If there exists no sequence $A_0,\dots,A_N \subset X$
  with $\mu_X(A_i) \ge \kappa_i$, $i=0,1,\cdots,N$, then
  we define
  \[
  \Sep(X;\kappa_0,\kappa_1, \cdots, \kappa_N) := 0.
  \]
\end{defn}

\begin{prop} \label{prop:ObsDiam-Sep}
  For any mm-space $X$ and any real numbers $\kappa$ and $\kappa'$
  with $\kappa > \kappa' > 0$, we have
  \[
  \ObsDiam(X;-2\kappa) \le \Sep(X;\kappa,\kappa)
  \le \ObsDiam(X;-\kappa').
  \]
\end{prop}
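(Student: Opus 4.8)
The plan is to prove the two inequalities separately, in each case passing between $X$ and $\R$ through a $1$-Lipschitz function and comparing partial diameters of the resulting pushforward measures on $\R$.

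For the left inequality $\ObsDiam(X;-2\kappa)\le\Sep(X;\kappa,\kappa)$, I would fix an arbitrary $1$-Lipschitz $f\colon X\to\R$, put $\nu:=f_*\mu_X$, and show $\diam(\nu;1-2\kappa)\le\Sep(X;\kappa,\kappa)$; taking the supremum over $f$ then gives the claim. One may assume $\kappa<1/2$, since otherwise $1-2\kappa\le 0$ and $\diam(\nu;1-2\kappa)=0$. Writing $F(t):=\nu((-\infty,t])$ and $\bar F(t):=\nu([t,\infty))$, set $a:=\inf\{t:F(t)\ge\kappa\}$ and $b:=\sup\{t:\bar F(t)\ge\kappa\}$; these are finite, one has $a\le b$ (if $a>b$ then $(-\infty,a)\cup(b,\infty)=\R$, forcing $1\le 2\kappa$), and one-sided continuity of $F$ and $\bar F$ gives $\nu((-\infty,a])\ge\kappa$, $\nu([b,\infty))\ge\kappa$, $\nu((-\infty,a))\le\kappa$, $\nu((b,\infty))\le\kappa$. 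Hence $\nu([a,b])\ge 1-2\kappa$, so $\diam(\nu;1-2\kappa)\le b-a$. The Borel sets $A_0:=f^{-1}((-\infty,a])$ and $A_1:=f^{-1}([b,\infty))$ satisfy $\mu_X(A_i)\ge\kappa$, and since $f$ is $1$-Lipschitz with $f\le a$ on $A_0$ and $f\ge b$ on $A_1$, we get $d_X(A_0,A_1)\ge b-a$. Therefore $\Sep(X;\kappa,\kappa)\ge b-a\ge\diam(\nu;1-2\kappa)$, as desired.

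For the right inequality $\Sep(X;\kappa,\kappa)\le\ObsDiam(X;-\kappa')$, I would take arbitrary Borel sets $A_0,A_1\subset X$ with $\mu_X(A_i)\ge\kappa$, set $d:=d_X(A_0,A_1)$, and use the $1$-Lipschitz function $f(x):=d_X(x,A_0)$ with $\nu:=f_*\mu_X$. Then $\nu(\{0\})\ge\mu_X(A_0)\ge\kappa$ and $\nu([d,\infty))\ge\mu_X(A_1)\ge\kappa$. For any Borel $B\subset\R$ with $\nu(B)\ge 1-\kappa'$, the strict inequality $\kappa>\kappa'$ forces $\nu(B\cap\{0\})\ge\kappa-\kappa'>0$ and $\nu(B\cap[d,\infty))\ge\kappa-\kappa'>0$, so $B$ contains $0$ and some point $\ge d$; hence $\diam(B)\ge d$. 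Taking the infimum over such $B$ yields $\diam(\nu;1-\kappa')\ge d$, so $\ObsDiam(X;-\kappa')\ge\diam(f_*\mu_X;1-\kappa')\ge d=d_X(A_0,A_1)$, and the supremum over $A_0,A_1$ completes the argument.

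I do not anticipate a serious obstacle: most of the work is measure-theoretic bookkeeping (finiteness of $a$ and $b$, one-sided continuity of the distribution functions, the degenerate ranges $\kappa\ge1/2$ and $\kappa\ge1$, and the convention $\Sep=0$ when no admissible configuration of subsets exists). The only step needing an idea rather than a computation is the left inequality, where from an \emph{arbitrary} $1$-Lipschitz $f$ one must produce two subsets of $X$ of measure at least $\kappa$ separated by at least $\diam(f_*\mu_X;1-2\kappa)$; the natural device is to cut $\R$ at the $\kappa$- and $(1-\kappa)$-quantiles of $\nu=f_*\mu_X$ and pull back the two resulting half-lines by $f$.
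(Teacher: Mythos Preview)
The paper does not actually supply a proof of this proposition: it is listed in the Preliminaries section as a known fact (the standard reference being Gromov's book and Shioya's monograph cited in the paper), so there is nothing to compare your argument against.

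That said, your proposal is correct and is essentially the standard proof. Both halves go through as written: for the left inequality, cutting $\R$ at the $\kappa$- and $(1-\kappa)$-quantiles of $f_*\mu_X$ and pulling back is exactly the right move, and your handling of the degenerate case $\kappa\ge 1/2$ and the verification that $a\le b$ are fine. For the right inequality, the distance function $f(x)=d_X(x,A_0)$ is the canonical choice, and your use of the strict inequality $\kappa>\kappa'$ to force $B$ to meet both $\{0\}$ and $[d,\infty)$ is clean. The only point worth a remark is that the definition of $\Sep$ in the paper also covers the case where no admissible pair $A_0,A_1$ exists (then $\Sep=0$), which you already noted makes the right inequality trivial.
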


\begin{defn}[Parameter]
  Let $I := [\,0,1\,)$ and let $X$ be an mm-space.
  A map $\varphi : I \to X$ is called a \emph{parameter of $X$}
  if $\varphi$ is a Borel measurable map such that
  \[
  \varphi_*\cL^1 = \mu_X,
  \]
  where $\cL^1$ denotes the one-dimensional Lebesgue measure on $I$.
\end{defn}

Any mm-space has a parameter.

\begin{defn}[Box distance]
  We define the \emph{box distance $\square(X,Y)$ between
    two mm-spaces $X$ and $Y$} to be
  the infimum of $\varepsilon \ge 0$
  satisfying that there exist parameters
  $\varphi : I \to X$, $\psi : I \to Y$, and
  a Borel subset $I_0 \subset I$ such that
  \begin{align}
    & |\,\varphi^*d_X(s,t)-\psi^*d_Y(s,t)\,| \le \varepsilon
    \quad\text{for any $s,t \in I_0$};\tag{1}\\
    & \cL^1(I_0) \ge 1-\varepsilon,\tag{2}
  \end{align}
  where
  $\varphi^*d_X(s,t) := d_X(\varphi(s),\varphi(t))$ for $s,t \in I$.
\end{defn}

The box distance function $\square$ is a complete separable metric on
the set of isomorphism classes of mm-spaces.

The following is a corollary to \cite{OzSy:pyramid}*{Theorem 1.1}.

\begin{thm}[Limit formula for observable diameter; \cite{OzSy:pyramid}]
  \label{thm:limf}
  Let $X$ and $X_n$, $n=1,2,\dots$, be mm-spaces.
  If $X_n$ $\square$-converges to $X$ as $n\to\infty$, then
  \begin{align*}
    \ObsDiam(X;-\kappa)
    &= \lim_{\varepsilon\to 0+} \liminf_{n\to\infty}
    \ObsDiam(X_n;-(\kappa+\varepsilon)) \\
    &= \lim_{\varepsilon\to 0+} \limsup_{n\to\infty}
    \ObsDiam(X_n;-(\kappa+\varepsilon))
  \end{align*}
  for any $\kappa > 0$.
\end{thm}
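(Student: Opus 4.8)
The plan is to transfer everything to the unit interval $I$ and move $1$-Lipschitz functions back and forth between $X$ and the spaces $X_n$. First I would invoke the parameter description of the box distance (which underlies \cite{OzSy:pyramid}): since $X_n \to X$ in $\square$, there are a parameter $\varphi : I \to X$ and parameters $\varphi_n : I \to X_n$ such that $\varphi_n^*d_{X_n} \to \varphi^*d_X$ in measure on $I\times I$; concretely, for every $\varepsilon>0$ there is a Borel set $I_\varepsilon\subset I$ with $\cL^1(I_\varepsilon)>1-\varepsilon$ and $|\varphi_n^*d_{X_n}(s,t)-\varphi^*d_X(s,t)|<\varepsilon$ for $s,t\in I_\varepsilon$ and all large $n$. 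Because a parameter pushes $\cL^1$ forward to the measure, $f_*\mu_X=(f\circ\varphi)_*\cL^1$ for every $1$-Lipschitz $f$, and likewise on $X_n$. I would also record two elementary facts about the partial diameter of a Borel probability measure $\nu$ on $\R$: it is nondecreasing and left-continuous in the mass parameter (so that $\ObsDiam(X;-\kappa)=\lim_{\varepsilon\to0+}\ObsDiam(X;-(\kappa+\varepsilon))$, which makes the right-hand limits in the statement harmless), and it is stable under the L\'evy--Prokhorov distance $d_P$ in the sense that $d_P(\nu,\nu')\le\delta$ implies $\diam(\nu';\alpha)\ge\diam(\nu;\alpha-\delta)-2\delta$.

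For the inequality $\ObsDiam(X;-\kappa)\le\lim_{\varepsilon\to0+}\liminf_n\ObsDiam(X_n;-(\kappa+\varepsilon))$, I would fix a $1$-Lipschitz $f:X\to\R$, set $g:=f\circ\varphi$, and for $\varepsilon>0$ and $n$ large define $f_n:X_n\to\R$ by $f_n(x):=\varepsilon+\inf_{t\in I_\varepsilon}\bigl(g(t)+d_{X_n}(x,\varphi_n(t))\bigr)$. This $f_n$ is $1$-Lipschitz on $X_n$, and a direct estimate gives $g(s)\le f_n(\varphi_n(s))\le g(s)+\varepsilon$ for $s\in I_\varepsilon$ (the upper bound by taking $t=s$; the lower bound from $g(t)+\varphi^*d_X(s,t)\ge g(s)$ together with $|\varphi_n^*d_{X_n}-\varphi^*d_X|<\varepsilon$ on $I_\varepsilon$). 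Hence $f_n\circ\varphi_n$ and $g$ differ by at most $\varepsilon$ off a set of $\cL^1$-measure $<\varepsilon$, so $d_P\bigl((f_n)_*\mu_{X_n},\,f_*\mu_X\bigr)\le\varepsilon$, and the stability fact yields $\ObsDiam(X_n;-(\kappa+\varepsilon))\ge\diam\bigl((f_n)_*\mu_{X_n};1-\kappa-\varepsilon\bigr)\ge\diam(f_*\mu_X;1-\kappa-2\varepsilon)-2\varepsilon$. Letting $n\to\infty$, then $\varepsilon\to0+$ (left-continuity), then taking the supremum over $f$ finishes this half.

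For the reverse inequality $\lim_{\varepsilon\to0+}\limsup_n\ObsDiam(X_n;-(\kappa+\varepsilon))\le\ObsDiam(X;-\kappa)$, I would fix $\varepsilon>0$, pass to a subsequence along which $\ObsDiam(X_n;-(\kappa+\varepsilon))$ converges to the $\limsup$ (which one should first argue is finite, or else handle separately), and choose nearly optimal $1$-Lipschitz $f_n:X_n\to\R$. After subtracting from $f_n$ the midpoint of a near-optimal interval of mass $\ge1-\kappa-\varepsilon$ and truncating at a height uniform in $n$ — so as to retain the mass and diameter responsible for $\ObsDiam(X_n;-(\kappa+\varepsilon))$ while making the functions uniformly bounded — the pull-backs $g_n:=f_n\circ\varphi_n$ satisfy $|g_n(s)-g_n(t)|\le\varphi_n^*d_{X_n}(s,t)$, hence are almost $1$-Lipschitz for $\varphi^*d_X$ on $I_\varepsilon$. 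An Arzel\`a--Ascoli-type argument (diagonal extraction over a countable dense subset of $I$, using $\varphi_n^*d_{X_n}\to\varphi^*d_X$ in measure) then gives a further subsequence with $g_n\to g$ in measure, where $|g(s)-g(t)|\le\varphi^*d_X(s,t)$ for a.e.\ $(s,t)$; modifying $g$ on a null set, it descends to a genuine $1$-Lipschitz $f:X\to\R$ with $f\circ\varphi=g$ a.e. Convergence in measure gives $d_P\bigl((f_n)_*\mu_{X_n},f_*\mu_X\bigr)\to0$, and the stability fact gives $\ObsDiam(X;-\kappa)\ge\diam(f_*\mu_X;1-\kappa)\ge\limsup_n\ObsDiam(X_n;-(\kappa+\varepsilon))-c\varepsilon$; then $\varepsilon\to0+$ concludes.

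The hard part will be the third paragraph: arranging the normalization-and-truncation so that it genuinely preserves the partial diameter at level $1-\kappa-\varepsilon$ (this is delicate when $\kappa$ is not small, since then the optimal set can carry less than half the mass and lie far from any median of $f_n$), and running the compactness argument that produces a limiting $1$-Lipschitz function \emph{on $X$} rather than merely a measurable function on $I$. This is exactly the step that uses box convergence rather than mere weak convergence of the push-forward measures, and it is the technical core of \cite{OzSy:pyramid}; in the paper the present theorem is simply read off from \cite{OzSy:pyramid}*{Theorem 1.1}.
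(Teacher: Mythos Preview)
The paper does not prove this theorem: it is stated as a corollary of \cite{OzSy:pyramid}*{Theorem 1.1} and imported without argument, exactly as you note in your last sentence. So there is no proof here to compare your sketch against; what you have written is an attempt to supply the argument that the present paper deliberately outsources.

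As an outline of the proof in \cite{OzSy:pyramid}, your sketch is sound. The transfer to $I$ via parameters, the McShane-type formula $f_n(x)=\varepsilon+\inf_{t\in I_\varepsilon}\bigl(g(t)+d_{X_n}(x,\varphi_n(t))\bigr)$ for the easy inequality, the Prokhorov-stability estimate for partial diameters, and the Arzel\`a--Ascoli/diagonal extraction for the hard inequality are indeed the right moves, and your verification that $g(s)\le f_n(\varphi_n(s))\le g(s)+\varepsilon$ on $I_\varepsilon$ is correct. Two points to be careful about. First, the existence of a \emph{single} parameter $\varphi:I\to X$ together with parameters $\varphi_n:I\to X_n$ such that $\varphi_n^*d_{X_n}\to\varphi^*d_X$ in measure is not the raw definition of $\square$-convergence (which a priori allows the parameter of $X$ to vary with $n$); it is a separate lemma proved in \cite{Sy:book}. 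Second, the right-continuity $\ObsDiam(X;-\kappa)=\lim_{\varepsilon\to0+}\ObsDiam(X;-(\kappa+\varepsilon))$ that you invoke is precisely the constant-sequence case of the theorem you are proving, so it must be established independently (it follows because $\alpha\mapsto\diam(\nu;\alpha)$ is left-continuous for $\nu$ on $\R$, hence lower semicontinuous, and a supremum of lower semicontinuous nondecreasing functions is again left-continuous). Your own diagnosis of the hard step---normalizing and truncating the $f_n$ without losing the relevant partial diameter, and descending from a limit on $I$ to a genuine $1$-Lipschitz function on $X$---is accurate; that is exactly where the work in \cite{OzSy:pyramid} sits.
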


\section{Proof}

We need some lemmas for the proof of Theorem \ref{thm:upper}.

\begin{defn}
  Let $k$ be a natural number.
  A \emph{$k$-regular} mm-space is defined to be
  a $k$-point mm-space $X$ satisfying that
  $d_X(x,y) = 1$ for any different two points $x,y \in X$
  and $\mu_X = \frac{1}{k} \sum_{x \in X} \delta_x$,
  where $\delta_x$ denotes the Dirac measure at $x$.
\end{defn}

\begin{lem} \label{lem:k-reg}
  Let $X$ be a $k$-regular mm-space.
  Then, for any $0 < \kappa < 1$ and $1 \le p < +\infty$,
  we have
  \[
  \ObsDiam(X_p^n;-\kappa)
  \le n^{\frac{1}{2p}-\frac{1}{2}} \ObsDiam(X_1^n;-\kappa) + 4n^{\frac{1}{2p}}.
  \]
\end{lem}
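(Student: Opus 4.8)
The plan is to reduce everything to the case $p=1$ by exploiting that, for a $k$-regular space $X$, the metric $d_X$ takes only the values $0$ and $1$. Indeed, then $d_X(x_i,y_i)^p=d_X(x_i,y_i)$, so
\[
  d_{X^n_p}(x,y)=\Bigl(\sum_{i=1}^n d_X(x_i,y_i)\Bigr)^{1/p}=d_{X^n_1}(x,y)^{1/p}
  \qquad (x,y\in X^n),
\]
and $d_{X^n_1}$ is integer-valued with $0\le d_{X^n_1}\le n$. The quantitative input will be the concavity of $\phi(t):=t^{1/p}$ on $[0,\infty)$ (valid because $1\le p$): its tangent line at $t=\sqrt n$ gives $\phi(t)\le Lt+b$ for all $t\ge 0$, where $L:=\tfrac1p\,n^{\frac1{2p}-\frac12}\le n^{\frac1{2p}-\frac12}$ and $b:=\bigl(1-\tfrac1p\bigr)n^{\frac1{2p}}\le n^{\frac1{2p}}$.

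Now I would fix a $1$-Lipschitz function $f:X^n_p\to\R$. Combining the two displays, $|f(x)-f(y)|\le L\,d_{X^n_1}(x,y)+b$ for all $x,y\in X^n$. The next step is to replace $f$ by a genuine $1$-Lipschitz function on $X^n_1$ that stays close to it, namely the inf-convolution
\[
  g(x):=\min_{y\in X^n}\bigl(L^{-1}f(y)+d_{X^n_1}(x,y)\bigr),
\]
a finite minimum since $X^n$ is a finite set. A routine check shows that $g$ is $1$-Lipschitz (hence continuous) with respect to $d_{X^n_1}$ and that $Lg(x)\le f(x)\le Lg(x)+b$ for every $x$ — the first inequality by taking $y=x$ in the definition of $g$, the second by inserting the estimate $|f(x)-f(y)|\le L\,d_{X^n_1}(x,y)+b$ into the definition of $g$.

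Finally I would transfer the concentration. From $Lg\le f\le Lg+b$, any interval $J\subset\R$ with $(Lg)_*\mu_X^{\otimes n}(J)\ge 1-\kappa$ admits a one-sided $b$-enlargement $J'\supseteq J$ with $|J'|=|J|+b$ and $f_*\mu_X^{\otimes n}(J')\ge 1-\kappa$; taking the infimum over such $J$ and using $\diam((Lg)_*\mu_X^{\otimes n};1-\kappa)=L\,\diam(g_*\mu_X^{\otimes n};1-\kappa)$ gives
\[
  \diam(f_*\mu_X^{\otimes n};1-\kappa)\le L\,\diam(g_*\mu_X^{\otimes n};1-\kappa)+b\le L\,\ObsDiam(X^n_1;-\kappa)+b,
\]
the last step because $g$ is $1$-Lipschitz on $X^n_1$. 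Taking the supremum over all $1$-Lipschitz $f:X^n_p\to\R$ and using $L\le n^{\frac1{2p}-\frac12}$ and $b\le n^{\frac1{2p}}\le 4n^{\frac1{2p}}$ then yields the claim. I do not anticipate a genuine obstacle: the crux is simply to notice that concavity converts the $1/p$-th power into a linear term plus a bounded additive error, and that a uniformly bounded perturbation of $f$ moves the partial diameter of the push-forward by at most that bound; the constant $4$ in the statement is comfortably larger than the $1$ this argument actually produces.
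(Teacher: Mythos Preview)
Your argument is correct, and in fact yields the sharper inequality $\ObsDiam(X_p^n;-\kappa)\le \tfrac1p\,n^{\frac1{2p}-\frac12}\ObsDiam(X_1^n;-\kappa)+\bigl(1-\tfrac1p\bigr)n^{\frac1{2p}}$, of which the stated lemma is a weakening. Each step is sound: the identity $d_{X^n_p}=d_{X^n_1}^{1/p}$ holds because $d_X$ is $\{0,1\}$-valued; the tangent-line bound $\phi(t)\le Lt+b$ follows from concavity of $t\mapsto t^{1/p}$; the inf-convolution $g$ is $1$-Lipschitz with $Lg\le f\le Lg+b$ exactly as you say; and the transfer to partial diameters is justified because on $\R$ the infimum defining $\diam(\nu;1-\kappa)$ may be taken over closed intervals.

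Your route, however, is genuinely different from the paper's. The paper fixes $\delta=n^{1/(2p)}$, chooses a maximal $\delta$-separated net $Y\subset X^n_p$, observes that on $Y$ the restriction $f|_Y$ is $\delta^{1-p}$-Lipschitz for $d_{X^n_1}$ (since $d_{X^n_p}\ge\delta$ on $Y$ forces $d_{X^n_1}^{1/p}\le\delta^{1-p}d_{X^n_1}$ there), extends $f|_Y$ to a $\delta^{1-p}$-Lipschitz $f'$ on all of $X^n_1$, and then checks $|f-f'|\le 2\delta$ using that every point is within $\delta$ of $Y$. This is a geometric net-plus-extension argument; yours is an analytic concavity-plus-inf-convolution argument. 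Both hinge on the same identity $d_{X^n_p}=d_{X^n_1}^{1/p}$, but you linearize $t^{1/p}$ globally via its tangent at $\sqrt n$, whereas the paper linearizes it only on the large-distance regime singled out by the net. Your approach is shorter, avoids the net construction and the Lipschitz extension step, and delivers better constants; the paper's approach is perhaps more transparent about \emph{why} the scale $n^{1/(2p)}$ enters, since it appears directly as the net spacing rather than as a chosen tangent point.
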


\begin{proof}
  Let $f : X_p^n \to \R$ be an arbitrary $1$-Lipschitz function.
  For $\delta := n^{\frac{1}{2p}}$, we
  take a maximal $\delta$-separated net $Y \subset X_p^n$, i.e.,
  $Y$ is a maximal set of $X$ such that any different two points in $Y$
  has $l_p$-distance at least $\delta$.
  We prove that $f$ is $\delta^{1-p}$-Lipschitz with respect to $d_{X^n_1}$.
  In fact, since $X$ is $k$-regular, for any different two points $x,x' \in Y$ we have
  \[
  |f(x) - f(x')| \le d_{X^n_p}(x,x') = d_{X^n_1}(x,x')^{\frac{1}{p}}.
  \]
  So, it suffices to prove that $d_{X^n_1}(x,x')^{\frac{1}{p}} \le \delta^{1-p} d_{X^n_1}(x,x')$,
  which follows from
  \[
  \delta^{1-p} d_{X^n_1}(x,x')^{1-\frac{1}{p}}
  = \delta^{1-p} d_{X^n_p}(x,x')^{p-1} \ge \delta^{1-p} \delta^{p-1} = 1,
  \]
  since $d_{X^n_p}(x,x') \ge \delta$.
  
  There is a $\delta^{1-p}$-Lipschitz extension $f' : X_1^n \to \R$ of $f|_Y$.
  We have
  \begin{align*}
    \delta^{p-1} \diam(f'_*\mu_{X_1^n};1-\kappa)
    &= \diam((\delta^{p-1}f')_*\mu_{X_1^n};1-\kappa) \\
    &\le \ObsDiam(X_1^n;-\kappa)
  \end{align*}
  and hence
  \[
  \diam(f'_*\mu_{X_1^n};1-\kappa) \le
  n^{\frac{1}{2p}-\frac{1}{2}} \ObsDiam(X_1^n;-\kappa).
  \]
  
  Let us estimate the distance between $f$ and $f'$.
  For any point $x \in X_1^n$ there is a point $x' \in Y$ such that
  $d_{X^n_p}(x,x') \le \delta$.
  Then, since $d_{X^n_1}(x,x') = d_{X^n_p}(x,x')^p \le \delta^p$, we see
  \begin{align*}
  |f(x)-f'(x)| &\le |f(x)-f(x')| + |f(x')-f'(x')| + |f'(x')-f'(x)| \\
  &\le d_{X^n_p}(x,x') + \delta^{1-p} d_{X^n_1}(x,x') \\
  &\le \delta + \delta^{1-p} \delta^p = 2\delta.
  \end{align*}
  Therefore,
  \begin{align*}
    \diam(f_*\mu_{X_p^n};1-\kappa) &\le \diam(f'_*\mu_{X_1^n};1-\kappa) + 4\delta \\
    &\le  n^{\frac{1}{2p}-\frac{1}{2}} \ObsDiam(X_1^n;-\kappa) + 4n^{\frac{1}{2p}}.
  \end{align*}
  By the arbitrariness of $f$, this completes the proof.
\end{proof}

\begin{lem} \label{lem:box-np}
  Let $X$ and $Y$ be two mm-spaces.  For any natural number $n$
  and real number $p$ with $1 \le p \le +\infty$, we have
  \[
  \square(X_p^n,Y_p^n) \le n\cdot n! \, \square(X,Y).
  \]
\end{lem}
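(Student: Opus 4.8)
The plan is to lift a near-optimal pair of parameters realizing $\square(X,Y)$, coordinate by coordinate, to a pair of parameters for $X_p^n$ and $Y_p^n$, using one and the same measure-preserving ``reshuffling'' map $I\to I^n$ for \emph{both} spaces, so that condition (1) in the definition of the box distance is inherited with only mild loss, and condition (2) degrades in a controlled way.

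First I would fix any $\varepsilon>\square(X,Y)$ and choose parameters $\varphi\colon I\to X$ and $\psi\colon I\to Y$ together with a Borel set $I_0\subset I$ such that $|\varphi^*d_X(s,t)-\psi^*d_Y(s,t)|\le\varepsilon$ for all $s,t\in I_0$ and $\cL^1(I_0)\ge 1-\varepsilon$. Next I would fix a Borel map $\iota\colon I\to I^n$ with $\iota_*\cL^1=\cL^n$ --- for instance the bit-interleaving map, which distributes the binary digits of $t\in I$ into $n$ interleaved subsequences; such a map exists because $(I,\cL^1)$ and $(I^n,\cL^n)$ are isomorphic as probability spaces. Setting $\Phi:=(\varphi\times\cdots\times\varphi)\circ\iota\colon I\to X^n$ and $\Psi:=(\psi\times\cdots\times\psi)\circ\iota\colon I\to Y^n$, one gets $\Phi_*\cL^1=(\varphi_*\cL^1)^{\otimes n}=\mu_X^{\otimes n}$ and likewise $\Psi_*\cL^1=\mu_Y^{\otimes n}$, so $\Phi$ and $\Psi$ are parameters of $X_p^n$ and $Y_p^n$.

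The estimate then comes from two elementary inputs. Put $J_0:=\iota^{-1}(I_0^{\,n})$; since $\iota$ is measure-preserving, $\cL^1(J_0)=\cL^n(I_0^{\,n})=\cL^1(I_0)^n\ge(1-\varepsilon)^n\ge 1-n\varepsilon$. For $s,t\in J_0$, writing $\iota(s)=(s_1,\dots,s_n)$ and $\iota(t)=(t_1,\dots,t_n)$ with all $s_i,t_i\in I_0$, Minkowski's inequality in $\R^n$ (or the corresponding bound for the max-norm when $p=+\infty$) gives
\[
\bigl|\Phi^*d_{X_p^n}(s,t)-\Psi^*d_{Y_p^n}(s,t)\bigr|
\le\Bigl(\sum_{i=1}^n\bigl|\varphi^*d_X(s_i,t_i)-\psi^*d_Y(s_i,t_i)\bigr|^p\Bigr)^{1/p}
\le n^{1/p}\varepsilon\le n\varepsilon.
\]
Hence $\Phi$, $\Psi$, $J_0$ certify $\square(X_p^n,Y_p^n)\le n\varepsilon$, and letting $\varepsilon\downarrow\square(X,Y)$ yields $\square(X_p^n,Y_p^n)\le n\,\square(X,Y)$, which is a fortiori bounded by $n\cdot n!\,\square(X,Y)$.

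I do not expect a genuine obstacle: the crux is the bookkeeping in the second paragraph --- insisting that $\Phi$ and $\Psi$ be built from the \emph{same} reshuffling $\iota$, so that $\varepsilon$-closeness of distances on $I_0$ propagates to $n^{1/p}\varepsilon$-closeness on $J_0$ while the measure of $J_0$ drops by at most $n\varepsilon$. The only place where constants could be spent is the passage from $I$ to $I^n$, and the factor $n\cdot n!$ in the statement is very generous (the argument above already gives the sharper $n\,\square(X,Y)$); since the lemma is applied with $n$ fixed --- to transport $\square$-convergence $X_k\to X$ to $(X_k)_p^n\to X_p^n$ before invoking the limit formula (Theorem~\ref{thm:limf}) --- the precise $n$-dependent constant is immaterial.
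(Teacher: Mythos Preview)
Your proof is correct and follows essentially the same route as the paper's: lift the parameters coordinate-wise through a measure-preserving Borel isomorphism $I\to I^n$, use Minkowski for the distance estimate (yielding $n^{1/p}\varepsilon$), and Bernoulli for the measure estimate on $I_0^n$ (yielding $1-n\varepsilon$). Your observation that this already gives the sharper bound $n\,\square(X,Y)$ is also correct---the paper's extra factor $n!$ is indeed superfluous.
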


\begin{proof}
  We prove the lemma only in the case of $1 \le p < +\infty$.
  The proof for $p = +\infty$ is similar.
  
  Note that $\square \le 1$ and that the lemma is trivial if $\square(X,Y) = 1$.
  Assume $\square(X,Y) < \varepsilon \le 1$.
  It suffices to prove that $\square(X_p^n,Y_p^n) \le n\cdot n!\,\varepsilon$.
  There are parameters $\varphi : I \to X$, $\psi : I \to Y$,
  and a subset $I_0 \subset X$ such that $\cL^1(I_0) > 1-\varepsilon$ and
  \[
  |\;\varphi^*d_X(s,t) - \psi^*d_Y(s,t)\;| < \varepsilon
  \]
  for any $s,t \in I_0$.
  Define functions $\varphi_n : I^n \to X_p^n$ and $\psi_n : I^n \to Y_p^n$ by
  \begin{align*}
  \varphi_n(s_1,\dots,s_n) &:= (\varphi(s_1),\dots,\varphi(s_n)), \\
  \psi_n(s_1,\dots,s_n) &:= (\psi(s_1),\dots,\psi(s_n))
  \end{align*}
  for $(s_1,\dots,s_n) \in I^n$.
  For any $s = (s_1,\dots,s_n), t = (t_1,\dots,t_n) \in I_0^n$, we have
  \begin{align*}
    & |\; \varphi_n^*d_{X_p^n}(s,t) - \psi_n^*d_{Y_p^n}(s,t) \;| \\
    &= \left| \left( \sum_{i=1}^n d_X(\varphi(s_i),\varphi(t_i))^p \right)^{\frac{1}{p}}
    - \left( \sum_{i=1}^n d_Y(\psi(s_i),\psi(t_i))^p \right)^{\frac{1}{p}} \right| \\
    &\le \left( \sum_{i=1}^n |\; d_X(\varphi(s_i),\varphi(t_i) - d_Y(\psi(s_i),\psi(t_i))\;|^p
    \right)^{\frac{1}{p}} \\
    &< n^{\frac{1}{p}} \varepsilon
  \end{align*}
  and
  \[
    \cL^n(I_0^n) \ge \cL^1(I_0)^n > (1-\varepsilon)^n
    \ge 1 - n\cdot n! \, \varepsilon.
  \]
  Since there is a Borel isomorphism from $I$ to $I^n$ that pushes
  $\cL^1$ to $\cL^n$ (see \cite{Kch}*{(17.41)}), the proof is completed.
\end{proof}

\begin{proof}[Proof of Theorem \ref{thm:upper}]
  It suffices to prove the theorem only in the case where $\diam(X) = 1$,
  because of
  \[
  \ObsDiam(X_p^n;-\kappa) = \diam(X) \ObsDiam((\diam(X)^{-1}X)_p^n;-\kappa)
  \]
  for $\diam(X) > 0$.
  We assume $\diam(X) = 1$.

  If $p = 1$, then \cite{Ld}*{Corollary 1.7} implies
  \[
  \alpha_{X^n_1}(r) \le \exp\left(-\frac{r^2}{8\diam(X)^2 n}\right),
  \]
  which together with Proposition \ref{prop:alpha-ObsDiam}(1) proves the theorem.

  Assume $p > 1$.
  By \cite{Sy:book}*{\S 4}, there is a sequence of finite mm-spaces $Y_k$,
  $i=1,2,\dots$, with diameter $\le 1$ that
  $\square$-converges to $X$.
  In addition, we may assume that the $\mu_{Y_k}$-measure of each point in $Y_k$ 
  is an integral multiplication of $1/k$.
  In particular, $Y_k$ consists of at most $k$ points.
  Denoting by $Z_k$ a $k$-regular mm-space,
  we see that $Y_k \prec Z_k$ for each $k$.
  By $(Y_k)_p^n \prec (Z_k)_p^n$, Lemma \ref{lem:k-reg},
  and by the theorem for $p = 1$, we have
  \begin{align*}
  \ObsDiam((Y_k)_p^n;-\kappa) &\le \ObsDiam((Z_k)_p^n;-\kappa)\\
  &\le n^{\frac{1}{2p}-\frac{1}{2}} \ObsDiam((Z_k)_1^n;-\kappa) + 4n^{\frac{1}{2p}}\\
  &\le C_{\kappa,p} \, n^{\frac{1}{2p}}.
  \end{align*}
  Lemma \ref{lem:box-np} implies that $(Y_k)_p^n$ $\square$-converges
  to $X_p^n$ as $k \to \infty$.
  The limit formula (Theorem \ref{thm:limf}) proves
  \[
  \ObsDiam(X_p^n;-\kappa) = \lim_{\varepsilon\to 0+} \liminf_{k\to\infty}
  \ObsDiam((Y_k)_p^n;-(\kappa+\varepsilon))
  \le C_{\kappa,p} \, n^{\frac{1}{2p}}.
  \]
  This completes the proof of the theorem.
\end{proof}

\begin{proof}[Proof of Theorems \ref{thm:lower-p} and \ref{thm:lower-1}]
  Take an arbitrary point $x_0 \in X$ and fix it.
  We define
  \[
  f_n(x) := \sum_{i=1}^n d_X(x_0,x_i)^p
  \]
  for $x = (x_1,x_2,\dots,x_n) \in X^n$.
  Note that $f_n$ is the sum of a sequence of independent and identically distributed
  random variables.
  The mean of $f_n$ is
  \[
  \int_{X^n} f_n\;d\mu_{X}^{\otimes n} = n E_p(x_0),
  \]
  where $E_p(x_0) := \int_X d_X(x_0,\cdot)^p \; d\mu_X$.
  The variance of $f_n$ is
  \[
  \int_{X^n} (f_n - n E_p(x_0))^2\;d\mu_X^{\otimes n} = n V_p(x_0),
  \]
  where $V_p(x_0) := \int_X (d_X(x_0,\cdot)^p - E_p(x_0))^2 \; d\mu_X$.
  The central limit theorem tells us that
  \begin{equation}
    \tag{CLT} \left( \frac{f_n - n E_p(x_0)}{\sqrt{n V_p(x_0)}} \right)_* \mu_X^{\otimes n}
    \to \gamma^1 \quad \text{weakly as $n\to\infty$}.
  \end{equation}
  Take a real number $\kappa$ with $0 < \kappa < 1/2$ and let
  \begin{align*}
  \alpha_n &:= \inf \{\; x \in \R \mid (f_n)_*\mu_X(\,-\infty,x\,] \ge \kappa \;\}, \\
  \beta_n &:= \sup \{\; x \in \R \mid (f_n)_*\mu_X[\,x,+\infty\,) \ge \kappa \;\}, \\
  c_n &:= \frac{\beta_n-\alpha_n}{\sqrt{n V_p(x_0)}}.
  \end{align*}
  Then, (CLT) proves that
  $\lim_{n\to\infty} c_n = 2\Phi^{-1}(\kappa)$.
  Letting
  \[
  A_n := f_n^{-1}(\,-\infty,\alpha_n\,] \quad\text{and}\quad
  B_n := f_n^{-1}[\,\beta_n,+\infty\,),
  \]
  we have
  \[
  \mu_X^{\otimes n}(A_n),\mu_X^{\otimes n}(B_n) \ge \kappa.
  \]
  Let us estimate $d_{X_p^n}(A_n,B_n)$ from below.
  For any points $x \in A_n$ and $y \in B_n$,
  we see that
  \begin{align*}
  c_n\sqrt{nV_p(x_0)} &= \beta_n - \alpha_n
  \le |\;f_n(x) - f_n(y)\;| \\
  &\le \sum_{i=1}^n |\; d_X(x_0,x_i)^p - d_X(x_0,y_i)^p \;| \\
  \intertext{and by $|a^p - b^p| \le p \cdot\max\{a,b\}^{p-1} |a-b|$ for $a,b \ge 0$,}
  &\le p \diam(X)^{p-1} \sum_{i=1}^n |\; d_X(x_0,x_i) - d_X(x_0,y_i) \;| \\
  &\le p \diam(X)^{p-1} \sum_{i=1}^n d_X(x_i,y_i) \\
  &= p \, \delta \diam(X)^{p-1} \sum_{i=1}^n \frac{d_X(x_i,y_i)}{\delta} \\
  \intertext{and by the assumption $\inf_{x \neq y} d_X(x,y) \ge \delta$,}
  &\le p \, \delta \diam(X)^{p-1} \sum_{i=1}^n \frac{d_X(x_i,y_i)^p}{\delta^p} \\
  &= p \, \delta^{1-p} \diam(X)^{p-1} d_{X^n_p}(x,y)^p,
  \end{align*}
  which implies
  \[
  d_{X^n_p}(x,y) \ge \left( \frac{c_n\sqrt{V_p(x_0)}}{p} \right)^{\frac{1}{p}}
  \left( \frac{\delta}{\diam(X)} \right)^{1-\frac{1}{p}} n^{\frac{1}{2p}}.
  \]
  In the case where $p = 1$, we do not need the assumption
  $\inf_{x \neq y} d_X(x,y) \ge \delta$ and obtain
  \[
  d_{X^n_1}(x,y) \ge c_n\, \sqrt{nV_p(x_0)}.
  \]
  Thus, if $0 < \kappa' < \kappa$, then
  \begin{align*}
    \ObsDiam(X_p^n;-\kappa') &\ge \Sep(X_p^n;\kappa,\kappa) \\
    &\ge \left( \frac{c_n\sqrt{V_p(x_0)}}{p} \right)^{\frac{1}{p}}
    \left( \frac{\delta}{\diam(X)} \right)^{1-\frac{1}{p}} n^{\frac{1}{2p}}.
  \end{align*}
  This completes the proof.
\end{proof}

\begin{prop}[\cite{GmvMlm}, \cite{Gmv:greenbook}*{\S 3.$\frac{1}{2}$.42}] \label{prop:prod-mfd}
  Let $X$ be a compact and connected Riemannian manifold with
  dimension $\ge 1$, and let $0 < \kappa < 1$.
  We assume the measure $\mu_X$ to be the normalized volume measure on $X$,
  i.e., $\mu_X = \vol/\vol(X)$, where $\vol$ is the volume measure on $X$.
  Then, the sequence $\{\ObsDiam(X_2^n;-\kappa)\}_{n=1}^\infty$
  is bounded from above and bounded away from zero.
\end{prop}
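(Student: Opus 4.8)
The plan is to establish the two bounds separately, the lower one being essentially formal and the upper one carrying all the content.

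For the bound away from zero I would use Lipschitz domination. The projection $\pi\colon X_2^n\to X$ onto the first coordinate satisfies $d_X(\pi(x),\pi(y))=d_X(x_1,y_1)\le d_{X^n_2}(x,y)$ and $\pi_*(\mu_X^{\otimes n})=\mu_X$, so $X\prec X_2^n$. Proposition \ref{prop:ObsDiam-dom} then gives $\ObsDiam(X_2^n;-\kappa)\ge\ObsDiam(X;-\kappa)$, and the right-hand side is a positive constant independent of $n$ because $X$, being a connected manifold of positive dimension, is a nontrivial mm-space and $0<\kappa<1$ (see Section \ref{sec:intro}).

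For the upper bound the key observation is that, as an mm-space, $X_2^n$ is nothing but the Riemannian product manifold $X^n$ equipped with its normalized Riemannian volume: the $l_2$-metric on $X^n$ is exactly the distance function of the product Riemannian metric, and the normalized volume of the product equals $\mu_X^{\otimes n}$. This lets me bring in Riemannian analysis, and the crucial input is that the first nonzero eigenvalue of the Laplacian obeys $\lambda_1(X^n)=\lambda_1(X)=:\lambda>0$ for every $n$ --- because the spectrum of a Riemannian product is the set of all sums of eigenvalues of the factors, adjoining trivial (zero) factors does not shrink the spectral gap, and $\lambda>0$ since $X$ is compact and connected. Now let $f\colon X_2^n\to\R$ be an arbitrary $1$-Lipschitz function; then $|\nabla f|\le 1$ almost everywhere on $X^n$, so the Poincaré inequality on the compact manifold $X^n$ yields $V(f)\le\lambda^{-1}\int_{X^n}|\nabla f|^2\,d\mu_X^{\otimes n}\le\lambda^{-1}$, where $V(f)$ is the variance of $f$ with respect to $\mu_X^{\otimes n}$. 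Setting $m_f:=\int_{X^n}f\,d\mu_X^{\otimes n}$ and $t_\kappa:=(\kappa\lambda)^{-1/2}$, Chebyshev's inequality gives $\mu_X^{\otimes n}(\{\,|f-m_f|<t_\kappa\,\})\ge 1-\kappa$, hence $\diam(f_*\mu_X^{\otimes n};1-\kappa)\le 2t_\kappa$. Since $2t_\kappa=2\,(\kappa\,\lambda_1(X))^{-1/2}$ depends on neither $f$ nor $n$, taking the supremum over $f$ yields $\ObsDiam(X_2^n;-\kappa)\le 2\,(\kappa\,\lambda_1(X))^{-1/2}$ for all $n$.

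As an alternative to the last step one may instead invoke the Gromov--Milman concentration estimate \cite{GmvMlm}, which bounds the concentration function $\alpha_{X_2^n}$ in terms of $\lambda_1(X^n)=\lambda_1(X)$ alone, and then apply Proposition \ref{prop:alpha-ObsDiam}(1); this is the route indicated in \cite{Gmv:greenbook}*{\S 3.$\frac{1}{2}$.42} and gives the same conclusion with a (possibly larger) constant. Either way, the main obstacle is the pair of Riemannian facts in the previous paragraph: the identification of $X_2^n$ with the product Riemannian manifold and the stability of the spectral gap under Riemannian products (equivalently, that the Poincaré constant of $X^n$ equals that of $X$, uniformly in $n$). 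Once these are granted the concentration argument is routine --- indeed the elementary variance bound already suffices, so the full strength of the Gromov--Milman theorem is not needed, only the variational characterization of $\lambda_1$.
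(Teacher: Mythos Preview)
Your proof is correct and follows essentially the same route as the paper: the lower bound via the projection $X_2^n\to X$ and Lipschitz domination is identical, and for the upper bound both you and the paper use $\lambda_1(X_2^n)=\lambda_1(X)$ together with the standard spectral-gap bound on the observable diameter---the paper simply cites this as \cite{Sy:book}*{Corollary 2.39}, whereas you unpack it via Poincar\'e and Chebyshev (obtaining the slightly sharper constant $2$ in place of $2\sqrt{2}$).
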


\begin{proof}
  Since a natural projection $X_2^n \to X$ induces $X \prec X_2^n$,
  we have the lower estimate
  \[
  0 < \ObsDiam(X;-\kappa) \le \ObsDiam(X_2^n;-\kappa).
  \]

  Denote by $\lambda_1(X)$ the first non-zero eigenvalue of the Laplacian on $X$.
  By $\lambda_1(X_2^n) = \lambda_1(X)$, we have the upper estimate
  \[
  \ObsDiam(X_2^n;-\kappa) \le \frac{2\sqrt{2}}{\sqrt{\lambda_1(X)\kappa}}
  \]
  (see \cite{Sy:book}*{Corollary 2.39}).
  This proves the proposition.
\end{proof}

Combining Proposition \ref{prop:prod-mfd} with
Theorem \cite{OzSy:pyramid}*{Theorem 1.2} implies

\begin{cor}
  If $X$ is a compact and connected Riemannian manifold with
  dimension $\ge 1$, then
  $\{X_2^n\}_{n=1}^\infty$ has the phase transition property
  with critical scale order $\sim 1$.
\end{cor}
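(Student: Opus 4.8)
The plan is to deduce the corollary from exactly two ingredients already in hand: the two–sided bound on the observable diameter given by Proposition \ref{prop:prod-mfd}, and the general phase transition criterion \cite{OzSy:pyramid}*{Theorem 1.2}. This is the same mechanism by which the first corollary of Section \ref{sec:intro} was obtained from Theorems \ref{thm:upper}, \ref{thm:lower-p} and \ref{thm:lower-1}, the only difference being that the relevant order of $\ObsDiam$ is now $\sim 1$ instead of $\sim n^{\frac{1}{2p}}$.

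Concretely, I would first fix $\kappa \in (0,1)$ and record, from Proposition \ref{prop:prod-mfd}, constants $0 < c_\kappa \le C_\kappa < +\infty$ depending only on $X$ and $\kappa$ with
\[
c_\kappa \le \ObsDiam(X_2^n;-\kappa) \le C_\kappa \qquad \text{for all } n \ge 1;
\]
that is, $\ObsDiam(X_2^n;-\kappa)$ is bounded above and bounded away from zero, so its order as $n \to \infty$ is $\sim 1$. Combined with the scaling identity $\ObsDiam(t_n X_2^n;-\kappa) = t_n\,\ObsDiam(X_2^n;-\kappa)$, this gives $c_\kappa t_n \le \ObsDiam(t_n X_2^n;-\kappa) \le C_\kappa t_n$. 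Then I would feed this into \cite{OzSy:pyramid}*{Theorem 1.2}: since the observable diameter of $X_2^n$ is of order $\sim 1$, that theorem yields the phase transition property with critical scale order the reciprocal order, namely $\sim 1$. In the Lévy direction, if $t_n \to 0$ then $\ObsDiam(t_n X_2^n;-\kappa) \le C_\kappa t_n \to 0$ for every $\kappa$, so $\{t_n X_2^n\}$ is a Lévy family; in the opposite direction, if $t_n \to +\infty$ then the lower bound $c_\kappa t_n$, together with the product structure of $X_2^n$, is precisely what Theorem 1.2 converts into $\infty$-dissipation.

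The step that needs the most care is matching the hypotheses of \cite{OzSy:pyramid}*{Theorem 1.2} exactly — in particular, confirming that the uniform two-sided estimate furnished by Proposition \ref{prop:prod-mfd} is what that theorem consumes, and that the $\infty$-dissipation conclusion (which is strictly stronger than $\ObsDiam \to +\infty$, as it involves the separation distances $\Sep(\cdot;\kappa,\dots,\kappa)$ with arbitrarily many entries) is genuinely delivered by the theorem rather than requiring a separate argument producing many mutually far-apart subsets of $X_2^n$. Once this citation is lined up, no further computation is needed and the corollary follows at once.
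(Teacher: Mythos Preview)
Your proposal is correct and follows exactly the paper's own route: the corollary is stated there as an immediate consequence of Proposition \ref{prop:prod-mfd} combined with \cite{OzSy:pyramid}*{Theorem 1.2}, with no further argument given. Your added remarks on scaling and on matching the hypotheses of the cited theorem are reasonable elaborations, but the underlying mechanism is identical.
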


\begin{bibdiv}
  \begin{biblist}
  
\bib{AmMlm}{article}{
   author={Amir, D.},
   author={Milman, V. D.},
   title={Unconditional and symmetric sets in $n$-dimensional normed spaces},
   journal={Israel J. Math.},
   volume={37},
   date={1980},
   number={1-2},
   pages={3--20},
   issn={0021-2172},
}

%
%
%
%

\bib{GmvMlm}{article}{
   author={Gromov, M.},
   author={Milman, V. D.},
   title={A topological application of the isoperimetric inequality},
   journal={Amer. J. Math.},
   volume={105},
   date={1983},
   number={4},
   pages={843--854},
   issn={0002-9327},
}

\bib{Gmv:greenbook}{book}{
   author={Gromov, Misha},
   title={Metric structures for Riemannian and non-Riemannian spaces},
   series={Modern Birkh\"auser Classics},
   edition={Reprint of the 2001 English edition},
   note={Based on the 1981 French original;
   With appendices by M. Katz, P. Pansu and S. Semmes;
   Translated from the French by Sean Michael Bates},
   publisher={Birkh\"auser Boston Inc.},
   place={Boston, MA},
   date={2007},
   pages={xx+585},
   isbn={978-0-8176-4582-3},
   isbn={0-8176-4582-9},
}

\bib{Kch}{book}{
   author={Kechris, Alexander S.},
   title={Classical descriptive set theory},
   series={Graduate Texts in Mathematics},
   volume={156},
   publisher={Springer-Verlag},
   place={New York},
   date={1995},
   pages={xviii+402},
   isbn={0-387-94374-9},
}

\bib{Ld}{book}{
   author={Ledoux, Michel},
   title={The concentration of measure phenomenon},
   series={Mathematical Surveys and Monographs},
   volume={89},
   publisher={American Mathematical Society},
   place={Providence, RI},
   date={2001},
   pages={x+181},
   isbn={0-8218-2864-9},
}

\bib{Lv}{book}{
   author={L{\'e}vy, Paul},
   title={Probl\`emes concrets d'analyse fonctionnelle. Avec un compl\'ement
   sur les fonctionnelles analytiques par F. Pellegrino},
   language={French},
   note={2d ed},
   publisher={Gauthier-Villars},
   place={Paris},
   date={1951},
   pages={xiv+484},
}

\bib{Mlm:heritage}{article}{
   author={Milman, V. D.},
   title={The heritage of P.\ L\'evy in geometrical functional analysis},
   note={Colloque Paul L\'evy sur les Processus Stochastiques (Palaiseau,
   1987)},
   journal={Ast\'erisque},
   number={157-158},
   date={1988},
   pages={273--301},
   issn={0303-1179},
}


\bib{MlmSch}{book}{
   author={Milman, Vitali D.},
   author={Schechtman, Gideon},
   title={Asymptotic theory of finite-dimensional normed spaces},
   series={Lecture Notes in Mathematics},
   volume={1200},
   note={With an appendix by M. Gromov},
   publisher={Springer-Verlag},
   place={Berlin},
   date={1986},
   pages={viii+156},
   isbn={3-540-16769-2},
}

\bib{OzSy:pyramid}{article}{
  author={Ozawa, Ryunosuke},
  author={Shioya, Takashi},
  title={Limit formulas for metric measure invariants and phase transition property},
  note={preprint},
}

\bib{Sy:book}{book}{
   author={Shioya, Takashi},
   title={Metric measure geometry--Gromov's theory of
     convergence and concentration of metrics and measures},
   note={preprint},
}


\bib{Tlg}{article}{
   author={Talagrand, Michel},
   title={Concentration of measure and isoperimetric inequalities in product
   spaces},
   journal={Inst. Hautes \'Etudes Sci. Publ. Math.},
   number={81},
   date={1995},
   pages={73--205},
   issn={0073-8301},
}

\end{biblist}
\end{bibdiv}

\end{document}